\begin{document}
\newtheorem{theo}{Theorem}[section]
\newtheorem{atheo}{Theorem*}
\newtheorem{prop}[theo]{Proposition}
\newtheorem{aprop}[atheo]{Proposition*}
\newtheorem{lemma}[theo]{Lemma}
\newtheorem{alemma}[atheo]{Lemma*}
\newtheorem{exam}[theo]{Example}
\newtheorem{coro}[theo]{Corollary}
\theoremstyle{definition}
\newtheorem{defi}[theo]{Definition}
\newtheorem{rem}[theo]{Remark}


\newcommand{\Bb}{{\bf B}}
\newcommand{\Cb}{{\mathbb C}}
\newcommand{\Nb}{{\mathbb N}}
\newcommand{\Qb}{{\mathbb Q}}
\newcommand{\Rb}{{\mathbb R}}
\newcommand{\Zb}{{\mathbb Z}}
\newcommand{\Ac}{{\mathcal A}}
\newcommand{\Bc}{{\mathcal B}}
\newcommand{\Cc}{{\mathcal C}}
\newcommand{\Dc}{{\mathcal D}}
\newcommand{\Fc}{{\mathcal F}}
\newcommand{\Ic}{{\mathcal I}}
\newcommand{\Jc}{{\mathcal J}}
\newcommand{\Kc}{{\mathcal K}}
\newcommand{\Lc}{{\mathcal L}}
\newcommand{\Oc}{{\mathcal O}}
\newcommand{\Pc}{{\mathcal P}}
\newcommand{\Sc}{{\mathcal S}}
\newcommand{\Tc}{{\mathcal T}}
\newcommand{\Uc}{{\mathcal U}}
\newcommand{\Vc}{{\mathcal V}}

\author{Charles A. Akemann and Nik Weaver}

\title [Detecting Fourier subspaces]
       {Detecting Fourier subspaces}

\address {Department of Mathematics\\
UCSB\\
Santa Barbara, CA 93106\\
Department of Mathematics\\
Washington University\\
Saint Louis, MO 63130}

\email {akemann@math.ucsb.edu, nweaver@math.wustl.edu}

\date{\em March 23, 2015}

\thanks{Second author partially supported by NSF grant DMS-1067726.}


\begin{abstract}
Let $G$ be a finite abelian group. We examine the discrepancy between
subspaces of $l^2(G)$ which are diagonalized in the standard basis and
subspaces which are diagonalized in the dual Fourier basis. The general
principle is that a Fourier subspace whose dimension is small compared
to $|G| = {\rm dim}(l^2(G))$ tends to be far away from standard subspaces.
In particular,
the recent positive solution of the Kadison-Singer problem shows that from
within any Fourier subspace whose dimension is small compared to $|G|$
there is standard subspace which is essentially indistinguishable from
its orthogonal complement.
\end{abstract}

\maketitle

The purpose of this note is to describe a simple application of the recent
solution of the Kadison-Singer problem \cite{mss1, mss2} to a question in
harmonic analysis and signal analysis.

Let $G$ be a finite abelian group equipped with counting measure, and
for each $g \in G$ let $e_g \in l^2(G)$ be the function which takes the
value $1$ at $g$ and is zero elsewhere. Then $\{e_g: g \in G\}$ is an
orthonormal basis of $l^2(G)$; call it the {\it standard} basis.

Another nice basis of $l^2(G)$ comes from the dual group $\hat{G}$,
the set of characters of $G$, i.e., homomorphisms from $G$ into the
circle group ${\mathbb T}$. Every character has $l^2$ norm equal to
$|G|^{1/2}$, where $|G|$ is the cardinality of $G$, and the normalized
set $\{\hat{e}_\phi = |G|^{-1/2}\phi: \phi \in \hat{G}\}$ is also
an orthonormal basis of $l^2(G)$. We call this the {\it Fourier} basis.  (Note that when every element of $G$ has order 2, then the Fourier basis forms the rows of a Hadamard matrix \cite{YNX}.  The method of this paper applies to bases of this type as well even if they don't arise from a Fourier transform.)

Say that a subspace of $l^2(G)$ is {\it standard} if it is the span of
some subset of the standard basis, and {\it Fourier} if it is the span
of some subset of the Fourier basis. Now
each $\hat{e}_\phi$ is as far away from the standard basis as possible
in the sense that $|\langle \hat{e}_\phi, e_g\rangle| = |G|^{-1/2}$
for all $g \in G$ and $\phi \in \hat{G}$. However, Fourier subspaces can
certainly intersect standard subspaces --- trivially, $l^2(G)$ is itself
both a standard subspace and a Fourier subspace.

A more interesting question is whether Fourier subspaces whose dimensions
are ``small'' compared to $|G|$ can intersect standard subspaces which are
small in the same sense. This could be of interest in relation to signal
analysis, say, if we are trying to detect a signal by measuring a
relatively small number of frequencies.

(By interchanging the roles of a group and its dual group, we see that
the problem of detecting standard subspaces using Fourier subspaces is
equivalent to the problem of detecting Fourier subspaces using standard
subspaces. But in keeping with the signal analysis perspective, we will
stick with the first formulation.)

The basic obstacle to having small standard and Fourier subspaces
which intersect is the uncertainty principle for finite abelian groups
\cite{DS, TT, terras}. According to this principle, if a nonzero function
$f \in l^2(G)$ is supported on a set $S \subseteq G$
and its Fourier transform is supported on $T \subseteq \hat{G}$ ---
meaning that $\langle f, \hat{e}_\phi\rangle \neq 0$ only for $\phi \in T$
--- then $|S|\cdot |T| \geq |G|$. In the special case where $G$ has prime
order $p$, we have the much stronger inequality $|S| + |T| \ge p+1$, and
this inequality is absolutely sharp  \cite{TT}.
Intuitively, if $f$ is very localized with
respect to the standard basis then it must be ``spread out'' with respect
to the Fourier basis. In terms of subspaces, the result can be stated
as follows:

\begin{prop}
Let $G$ be a finite abelian group and let $E$ and $F$ respectively be
standard and Fourier subspaces of $l^2(G)$. If ${\rm dim}(E)\cdot{\rm dim}(F)
< |G|$ then $E \cap F = \{0\}$.  If $|G|$ is prime and
${\rm dim}(E) + {\rm dim}(F) \le |G|$ then $E \cap F = \{0\}$.
\end{prop}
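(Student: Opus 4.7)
The plan is to read this as a direct translation of the uncertainty principle (quoted immediately before the statement) from the language of functions and their supports to the language of subspaces and their intersections. Since both parts of the proposition are contrapositives of the two inequalities in the principle, I expect no genuine obstacle; the only thing to check is that the supports line up correctly with the subspaces.

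First I would fix index sets: write $E = \mathrm{span}\{e_g : g \in S\}$ for some $S \subseteq G$ with $|S| = \dim(E)$, and $F = \mathrm{span}\{\hat{e}_\phi : \phi \in T\}$ for some $T \subseteq \hat{G}$ with $|T| = \dim(F)$. Then any $f \in E$ vanishes off $S$, so $\mathrm{supp}(f) \subseteq S$, while any $f \in F$ satisfies $\langle f, \hat{e}_\phi\rangle = 0$ for $\phi \notin T$, which is precisely the condition that its Fourier transform is supported in $T$.

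Now suppose for contradiction that there is a nonzero $f \in E \cap F$. Both conditions above hold simultaneously for this $f$. The uncertainty principle then yields $|S|\cdot|T| \geq |G|$, i.e., $\dim(E)\cdot\dim(F) \geq |G|$, contradicting the first hypothesis. For the second part, when $|G| = p$ is prime, the sharper form of the uncertainty principle gives $|S| + |T| \geq p+1$, i.e., $\dim(E) + \dim(F) \geq |G|+1$, contradicting the hypothesis $\dim(E) + \dim(F) \leq |G|$. In both cases, no nonzero $f$ can exist in $E \cap F$, proving $E \cap F = \{0\}$.
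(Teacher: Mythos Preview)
Your argument is correct and is exactly the approach the paper takes: the paper remarks that the proposition ``follows immediately from the uncertainty principles described above because the dimensions of $E$ and $F$ equal the number of basis elements which span them, so that the support of any element of $E$ (respectively, $F$) has cardinality at most ${\rm dim}(E)$ (respectively, ${\rm dim}(F)$).'' Your write-up simply spells out this contrapositive in full.
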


This follows immediately from the uncertainty principles described above
because the dimensions of $E$ and $F$ equal the number of basis elements
which span them, so that the support of any element of $E$ (respectively,
$F$) has cardinality at most ${\rm dim}(E)$ (respectively, ${\rm dim}(F)$).  
(Uncertainty principles are related to signal reconstruction in a
different way in \cite{crt} and related papers.)

So standard and Fourier subspaces must intersect only in $\{0\}$ if both are
sufficiently small. However, according to the multiplicative bound in the
last proposition,  both dimensions
could be as small as $|G|^{1/2}$, which is small compared to $|G|$ when
$|G|$ is large. This means that the multiplicative bound does not prevent
standard and Fourier subspaces whose dimensions are small compared to $|G|$
from intersecting, although the additive bound when $|G|$ is prime certainly
does. Indeed, there are easy examples of intersecting standard and Fourier
subspaces whose dimensions are both equal to $|G|^{1/2}$. 

\begin{exam}
Let $G = {\mathbb Z}/n^2{\mathbb Z}$, where $|G| = n^2$ and the characters
have the form $\phi_b: a \mapsto e^{2\pi i ab/n^2}$ for $a, b \in
{\mathbb Z}/n^2{\mathbb Z}$. Here the function $f = \sum_{b=0}^{n-1} \phi_{nb}
\in l^2(G)$ satisfies
$$f(a) = \sum_{b=0}^{n-1} e^{2\pi i ab/n} =
\begin{cases}
n&\mbox{if $a \equiv 0$ (mod $n$)}\cr
0&\mbox{if $a \not\equiv 0$ (mod $n$)},
\end{cases}$$
so that $f$ belongs both to an $n$-dimensional Fourier subspace (directly
from its definition) and to an $n$-dimensional standard subspace (by the
preceding calculation).
\end{exam}

From the point of view of signal analysis, however, we are probably not so
interested in intersecting a single standard subspace. If we do not know
where the signal we want to detect is supported, we would presumably want to
intersect, if not every standard subspace, at least every standard subspace
whose dimension is greater than some threshold value. But unless the
relevant dimensions are large, this is impossible for elementary linear
algebra reasons. We have the following simple result:

\begin{prop}
Let $G$ be a finite abelian group and let $F$ be a
Fourier subspace of $l^2(G)$. Then there exists a standard subspace $E$ with
${\rm dim}(E) = |G| - {\rm dim}(F)$ such that $E \cap F = \{0\}$.
\end{prop}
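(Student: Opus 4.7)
The plan is to observe that this statement does not actually use the Fourier structure of $F$ at all; it is a general fact about a $d$-dimensional subspace of $l^2(G)$ relative to the standard basis. Write $n = |G|$ and $d = {\rm dim}(F)$. First I would locate a subset $T \subseteq G$ of size $d$ such that the evaluation map $f \mapsto (f(t))_{t \in T}$, restricted to $F$, is a linear isomorphism onto $\Cb^T$, and then take $E$ to be the span of $\{e_g : g \in G \setminus T\}$; this automatically has the desired dimension $n - d$.

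To produce $T$, I would choose any basis $v_1, \ldots, v_d$ of $F$ and form the $d \times n$ matrix $V$ whose $i$-th row lists the coordinates of $v_i$ in the standard basis. Since $V$ has rank $d$, the equality of row and column rank gives $d$ linearly independent columns of $V$. Let $T \subseteq G$ index such a choice, so that the resulting $d \times d$ submatrix $V|_T$ is invertible.

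To verify $E \cap F = \{0\}$: every $f \in F$ has the form $f = \sum_{i=1}^d c_i v_i$, and the condition $f \in E$ is the same as $f(t) = 0$ for every $t \in T$. This is a homogeneous linear system in the unknowns $c_1, \ldots, c_d$ whose matrix of coefficients is $V|_T$ up to transposition; by invertibility the only solution is $c_1 = \cdots = c_d = 0$, so $f = 0$.

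The main ``obstacle'' is really none at all: the whole argument reduces to the elementary fact that row rank equals column rank, and no harmonic-analytic input is used. Indeed, the conclusion holds verbatim with $F$ replaced by an arbitrary $d$-dimensional subspace of $l^2(G)$. This sets the stage for the more interesting question the paper takes up next, namely strengthening $E \cap F = \{0\}$ to a quantitative near-orthogonality between $E$ and $F$, which is presumably where the Kadison--Singer theorem will enter.
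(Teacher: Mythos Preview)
Your argument is correct and is essentially the same elementary linear algebra as the paper's proof, just packaged differently: the paper phrases it as a Steinitz-type exchange (successively swap standard basis vectors for the Fourier basis vectors of $F$ while keeping the set independent, then let $E$ be the span of the surviving $e_g$'s), whereas you phrase it as ``row rank equals column rank'' to locate a good coordinate set $T$ and take $E$ spanned by the complementary $e_g$'s. Both arguments ignore the Fourier structure entirely and work for an arbitrary subspace $F$, exactly as you observed.
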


The proof is easy. Starting with the standard basis $B = \{e_g: g \in G\}$
of $l^2(G)$ and the basis $B' = \{\hat{e}_\phi: \phi \in T\}$ of $F$, we can
successively replace distinct elements of $B$ with elements of $B'$ in such a way
 that the set remains linearly independent at each step. The end result
will be a
(non orthogonal) basis of $l^2(G)$ of the form $B' \cup \{e_g: g \in S\}$
for some set $S \subseteq G$ with $|S| = |G| - {\rm dim}(F)$. Then
$E = {\rm span}\{e_g: g \in S\}$ is the desired standard subspace which
does not intersect $F$.

Thus, if the dimension of a Fourier subspace is small compared to $|G|$,
there will be many standard subspaces whose dimensions are small compared
to $|G|$ which it does not intersect. But requiring subspaces to
intersect is a very strong condition. Merely asking that the Fourier
subspace contain unit vectors which are close to the standard subspace
makes the problem much more interesting and difficult. Nonetheless, we
can still get something from simply counting dimensions. Namely, if
$\hat{e}_\phi$ is any Fourier basis vector and $E$ is a standard
subspace, then, letting $P$ be the orthogonal projection onto
$E = {\rm span}\{e_g: g \in S\}$, we have
$$\|P\hat{e}_\phi\|^2
= \left\|\sum_{g \in S} \langle \hat{e}_\phi, e_g\rangle e_g\right\|^2
= \frac{{\rm dim}(E)}{|G|}.$$
Thus, even a single Fourier basis vector can ``detect'' arbitrary standard
subspaces to the extent that those subspaces have dimension comparable to
$|G|$.

Can Fourier subspaces do better? The relevant gauge here is the quantity
$$\|PQ\| = \|QP\| = \sup \{\|Qv\|: v \in E, \|v\| = 1\}$$
where $P$ and $Q$ are the orthogonal projections onto a standard subspace
$E$ and a Fourier subspace $F$, respectively. It effectively measures the
minimal angle between $E$ and $F$. The surprisingly strong result is
that, by this measure, so long as the dimension of a Fourier subspace is
small compared to $|G|$, there are standard subspaces which it detects
only marginally better than a single Fourier basis vector does.

One might suspect that a randomly chosen standard subspace would demonstrate
that claim. Maybe this technique
would work for most Fourier subspaces, but it does not in general,
even for Fourier subspaces whose dimension is small compared to $|G|$. 

\begin{exam}
Recall Example 0.2
where an $n$-dimensional Fourier subspace $F$ intersected an $n$-dimensional
standard subspace $E$. Here ${\rm dim}(F)/|G| = 1/n$, which can be as small
as we like. Now consider the group $G' = {\mathbb Z}/n^2{\mathbb Z} \times
{\mathbb Z}/N{\mathbb Z}$ where $N$ is large compared to $n$. We have a
natural identification
$$l^2(G') \cong
l^2({\mathbb Z}/n^2{\mathbb Z}) \otimes l^2({\mathbb Z}/N{\mathbb Z}),$$
under which identification $F\otimes l^2({\mathbb Z}/N{\mathbb Z})$ is
a Fourier subspace. The ratio of its dimension to the cardinality of $G'$
is still $1/n$. But for sufficiently large $N$, a randomly chosen subset of
$G'$ will contain at least one element of $S \times {\mathbb Z}/N{\mathbb Z}$
with high probability, where $E = {\rm span}\{e_g: g \in S\}$. One can even
say this of a randomly chosen subset of $G'$ of cardinality $|G'|/n$.
This means that a randomly chosen standard subspace of dimension $|G'|/n$
will intersect the Fourier subspace $F\otimes l^2({\mathbb Z}/N{\mathbb Z})$
with high probability --- depending on the value of $N$, with probability as
close to 1 as we like.

\end{exam}

Nonetheless, if a Fourier subspace $F$ has relatively small dimension, there
will always exist standard subspaces of arbitrary dimension which are barely
closer to $F$ than they are to a single Fourier basis vector. We have the
following theorem:

\begin{theo}\label{onesided}
Let $G$ be a finite abelian group, let $F$ be a Fourier subspace of
$l^2(G)$, and let $Q$ be the orthogonal projection onto $F$. Then for
any $k \leq |G|$ there is a set $S \subseteq G$ with $|S| = k$ and
such that
$$\|PQ\|^2 \leq \frac{k}{|G|} + O(\sqrt{\epsilon}),$$
where $P$ is the orthogonal projection onto
${\rm span}\{e_g: g \in S\}$ and $\epsilon = {\rm dim}(F)/|G|$.
\end{theo}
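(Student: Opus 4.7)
The plan is to apply the Weaver form of the Kadison--Singer theorem (as proved in \cite{mss1, mss2}) to a Parseval frame that $Q$ naturally induces on $F$. Set $u_g := Qe_g$ for $g \in G$. Since $\sum_g u_g u_g^* = QIQ = Q$ and $Q$ restricts to the identity on $F$, the family $\{u_g\}_{g \in G}$ is a Parseval frame for $F$. Writing $F = \mathrm{span}\{\hat e_\phi : \phi \in T\}$ and expanding $Q = \sum_{\phi \in T}\hat e_\phi \hat e_\phi^*$, a direct computation gives
\[
\|u_g\|^2 = \langle e_g, Qe_g\rangle = \sum_{\phi \in T}|\hat e_\phi(g)|^2 = \frac{|T|}{|G|} = \epsilon
\]
for every $g \in G$, so all frame vectors share the same small squared norm $\epsilon$---exactly the uniformity hypothesis in Weaver's $\mathrm{KS}_r$ conjecture.

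For an integer $r$ to be chosen below, the Marcus--Spielman--Srivastava theorem produces a partition $G = S_1 \sqcup \cdots \sqcup S_r$ satisfying
\[
\Big\|\sum_{g \in S_j}u_g u_g^*\Big\| = \|QP_j Q\| = \|P_j Q\|^2 \leq \Big(\frac{1}{\sqrt r} + \sqrt \epsilon\Big)^2
\]
for each $j$, where $P_j$ is the standard projection onto $\mathrm{span}\{e_g : g \in S_j\}$. Taking $r := \lfloor|G|/k\rfloor$ ensures $r \leq |G|/k$, so the pigeonhole principle forces at least one part $S_{j^*}$ to satisfy $|S_{j^*}| \geq |G|/r \geq k$. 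Choosing any $k$-element subset $S \subseteq S_{j^*}$ and letting $P$ be its standard projection, the operator domination $P \leq P_{j^*}$ yields
\[
\|PQ\|^2 = \|QPQ\| \leq \|QP_{j^*}Q\| \leq \frac{1}{r} + \frac{2\sqrt\epsilon}{\sqrt r} + \epsilon.
\]

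Since $1/r \leq 1$ and $\epsilon \leq 1$, the two error terms $2\sqrt{\epsilon/r}$ and $\epsilon$ are each $O(\sqrt \epsilon)$. The remaining term $1/r$ equals $k/|G|$ exactly when $|G|/k \in \mathbb{Z}$, and otherwise overshoots it by at most $k^2/(|G|(|G|-k))$. The main technical obstacle is expected to be this rounding step: absorbing the gap $1/r - k/|G|$ into the $O(\sqrt\epsilon)$ error. For $k/|G|$ small compared to $\epsilon^{1/4}$ the overshoot is automatically $O(\sqrt\epsilon)$, and for $k$ close to $|G|$ the trivial bound $\|PQ\|^2 \leq 1$ lies within $O(\sqrt\epsilon)$ of $k/|G|$. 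Between these regimes, the gap should be closed by a complementary argument (applying the strategy to $S^c$ and using $\|PQ\|^2 = 1 - \lambda_{\min}(QP^cQ|_F)$), by instead taking $r = \lceil |G|/k\rceil$ and extending a slightly-too-small part by a few basis vectors (each adding at most $\epsilon$ to the operator norm), or by upgrading the one-sided MSS paving to a two-sided estimate that forces each partial sum near $(1/r)I_F$.
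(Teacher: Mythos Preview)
Your setup is correct and the identity $\|PQ\|^2 = \|QPQ\| = \|\sum_{g\in S} u_gu_g^*\|$ is exactly what the paper uses. However, the route you take---applying the MSS paving theorem with $r=\lfloor |G|/k\rfloor$ pieces and then trimming---has a genuine gap at the rounding step, and the fixes you sketch do not close it.

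The overshoot $1/r - k/|G|$ can be of constant size: if $|G|/k$ is just under $2$ (say $k$ slightly above $|G|/2$), then $r=1$ and the bound is vacuous; if $|G|/k$ is just under $3$, then $r=2$ and $1/r - k/|G|$ is about $1/6$, independent of $\epsilon$. Your proposed repairs do not cover this regime. The complementary identity $\|PQ\|^2 = 1 - \lambda_{\min}(QP^{c}Q|_F)$ requires a \emph{lower} bound on the smallest eigenvalue of a partial sum, whereas MSS paving only gives upper bounds on norms; obtaining that lower bound is precisely the two-sided control of Theorem~\ref{twosided}. The ``take $r=\lceil |G|/k\rceil$ and add a few vectors'' idea fails quantitatively: the shortfall $k - |G|/r$ can be of order $|G|$ (e.g.\ $k=|G|/2$, $r=3$ gives a deficit of roughly $|G|/6$), and each added vector can raise the norm by $\epsilon$, so the total increase is of order $|G|\epsilon = \dim(F)$, not $O(\sqrt{\epsilon})$. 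Your final suggestion---upgrading to a two-sided estimate forcing each partial sum near $(k/|G|)I_F$---does work, but that is the Lyapunov-type consequence of Kadison--Singer used for Theorem~\ref{twosided}, which the paper explicitly flags as the harder result.

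The paper's own proof avoids all of this by not partitioning at all. It uses the one-sided spectral sparsification technique from Srivastava's thesis: starting from $A_0=0$, it greedily adds one vector $u_{i_{j+1}}$ at a time, choosing it via the upper barrier potential $\Phi^{a}(A)={\rm Tr}((aI-A)^{-1})$ so that the barrier $a_j = \sqrt{\epsilon} + \frac{1}{1-\sqrt{\epsilon}}\cdot\frac{j}{n}$ advances by exactly $\frac{1}{(1-\sqrt{\epsilon})n}$ at each step while $\Phi^{a_j}(A_j)$ stays bounded by $\Phi^{a_0}(0)=m/\sqrt{\epsilon}$. After exactly $k$ steps one has a set $S$ with $|S|=k$ and $\|A_k\| < a_k = k/|G| + O(\sqrt{\epsilon})$. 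The argument is elementary (it predates the solution of Kadison--Singer) and produces a set of exactly the right cardinality, so no rounding issue ever arises.
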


This result is strengthened further by Theorem \ref{twosided} below, but Theorem 0.5
has an easier proof which is of independent interest. We include this
proof in the appendix.

The strengthened version of Theorem \ref{onesided} simultaneously asks the
same question about the complementary standard subspace. If our goal is
detection, then the worst that could happen here is that a Fourier subspace
$F$ does essentially no better than a single Fourier basis vector at detecting
either some standard subspace $E$ or its orthocomplement $E^\perp$. In fact,
this worst-case scenario is realized: we can show that every Fourier subspace
of small dimension relative to $|G|$ fails to do significantly better than
a single Fourier basis vector at detecting both a sequence of
standard subspaces of varying dimension and their orthocomplements.

In this case, constructing the undetectable standard subspaces is no longer
merely a matter of controlling the
largest eigenvalue of $QPQ$ (which suffices because $\|QPQ\| = \|PQ\|^2$).
Now we also have to control the largest eigenvalue of $Q(1-P)Q$, and
this is a Kadison-Singer type setup. Although the problem we consider here
is not as general as the full Kadison-Singer problem, the core difficulty
is clearly already present. Thus, one should not expect any
easier proof than the ones that appear in \cite{mss1}, \cite{mss2} or the remarkable generalization in  \cite{PB}.

\begin{theo}\label{twosided}
Let $G$ be a finite abelian group, let $F$ be a Fourier subspace of
$l^2(G)$, and let $Q$ be the orthogonal projection onto $F$. Then for
any $k \leq |G|$ there is a set $S \subseteq G$ with $|S| = k$ and
such that both
$$\|PQ\|^2 \leq \frac{k}{|G|} + O(\sqrt{\epsilon})$$
and
$$\|(I-P)Q\|^2 \leq \frac{|G| - k}{|G|} + O(\sqrt{\epsilon})$$
where $P$ is the orthogonal projection onto
${\rm span}\{e_g: g \in S\}$ and $\epsilon = {\rm dim}(F)/|G|$.
\end{theo}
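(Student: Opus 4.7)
The plan is to deduce both inequalities from a single operator-norm approximation
\[
\bigl\|QPQ-\alpha Q\bigr\|\le O(\sqrt{\epsilon}),
\]
where $\alpha=k/|G|$. Setting $u_g=Qe_g\in F$, one has $\|u_g\|^2=\dim(F)/|G|=\epsilon$, $\sum_g u_gu_g^*=Q$, and the decompositions $QPQ=\sum_{g\in S}u_gu_g^*$, $Q(I-P)Q=\sum_{g\notin S}u_gu_g^*$ satisfy $QPQ+Q(I-P)Q=Q$. Writing $Q=\alpha Q+(1-\alpha)Q$ and using $\|PQ\|^2=\|QPQ\|$ together with $\|(I-P)Q\|^2=\|Q(I-P)Q\|$, both claimed bounds follow from the displayed estimate by the triangle inequality. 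So the task reduces to finding $S\subseteq G$ with $|S|=k$ such that $\sum_{g\in S}u_gu_g^*$ is within $O(\sqrt{\epsilon})$, in operator norm, of the scalar multiple $\alpha Q$ of the identity on $F$.

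To produce such an $S$, I would invoke the Marcus--Spielman--Srivastava solution of Kadison--Singer from \cite{mss2} in a block-diagonal setup that handles both norms simultaneously. On $F\oplus F$, consider the independent random vectors
\[
v_g=(\alpha^{-1/2}u_g,\,0)\text{ with probability }\alpha,\qquad v_g=(0,\,(1-\alpha)^{-1/2}u_g)\text{ with probability }1-\alpha.
\]
One checks that $\sum_g E[v_gv_g^*]=I_{F\oplus F}$ and $E\|v_g\|^2=2\epsilon$, so the MSS theorem yields a realization with $\|\sum_g v_gv_g^*\|\le(1+\sqrt{2\epsilon})^2=1+O(\sqrt{\epsilon})$. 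That sum is block-diagonal, its two blocks being $\alpha^{-1}\sum_{g\in S_0}u_gu_g^*$ and $(1-\alpha)^{-1}\sum_{g\notin S_0}u_gu_g^*$, where $S_0$ records which coordinates realized in the first block. Scaling back yields simultaneously $\|\sum_{g\in S_0}u_gu_g^*\|\le\alpha+O(\sqrt{\epsilon})$ and $\|\sum_{g\notin S_0}u_gu_g^*\|\le(1-\alpha)+O(\sqrt{\epsilon})$, which is exactly what is wanted for the set $S_0$.

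The hard part, and the principal obstacle, is that this construction only guarantees $E|S_0|=k$ rather than $|S_0|=k$ exactly. Since $\|u_gu_g^*\|=\epsilon$, modifying $S_0$ by $t$ single-element additions or deletions changes each of the two norms by at most $t\epsilon$, so a correction of size $O(1/\sqrt{\epsilon})$ is harmless; however, naive concentration only bounds $||S_0|-k|$ by $O(\sqrt{|G|})$, which is far too weak once $\dim(F)$ is appreciable. The natural remedy, which I expect to be the key step, is to redo the interlacing-families argument of \cite{mss2} with the Bernoulli product ensemble replaced by the uniform measure on $k$-element subsets of $G$, checking that the corresponding expected mixed characteristic polynomial remains real-rooted with largest root at most $1+O(\sqrt{\epsilon})$. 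Building the cardinality constraint into the interlacing construction from the start, rather than attempting to patch it afterwards, is where the real work lies and is presumably why the full force of Kadison--Singer is required here, whereas the one-sided Theorem~\ref{onesided} admits a lighter proof.
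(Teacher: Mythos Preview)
Your reduction is exactly the paper's: with $u_g=Qe_g$ one has $\|u_g\|^2=\epsilon$, $\sum_g u_gu_g^*=Q$, $QPQ=\sum_{g\in S}u_gu_g^*$, and both conclusions follow from the single estimate $\bigl\|\sum_{g\in S}u_gu_g^*-\tfrac{k}{|G|}\,Q\bigr\|=O(\sqrt{\epsilon})$ via $\|PQ\|^2=\|QPQ\|$ and $Q(I-P)Q=Q-QPQ$. For the existence of such an $S$ the paper does not run MSS directly; it quotes the Lyapunov-type theorem of \cite{akewea} (the remark after Corollary~1.2 there), which already packages this consequence of \cite{mss1,mss2}. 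Your block-diagonal construction on $F\oplus F$ is essentially one route to that Lyapunov statement, so up to this point the two arguments coincide.

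Where you part ways is the cardinality issue. The paper disposes of it in one sentence: the $S$ supplied by \cite{akewea} may not have exactly $k$ elements, but its size is close enough to $k$ that adding or removing the necessary elements does not change the order of the estimate. You instead declare this ``the principal obstacle'' and propose to rebuild the entire interlacing-families argument over the uniform measure on $k$-element subsets of $G$. That ensemble lacks the product structure on which the method of \cite{mss1,mss2} rests, so carrying this out would be a substantial new project, and in any case it is not what the paper does. Your instinct that a naive post-hoc correction starting from raw MSS is delicate is not unreasonable, but the intended resolution here is simply the citation to the already-established result in \cite{akewea}, not a reworking of interlacing families.
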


\begin{proof}
For each $g \in G$ let $u_g = Qe_g \in F$. Then the
rank one operators $u_gu_g^*: f \mapsto \langle f, u_g\rangle u_g$ satisfy
$$\sum_{g\in G} u_gu_g^* f = \sum_{g\in G} \langle f, Qe_g\rangle\cdot Qe_g
= Q\left(\sum_{g\in G} \langle Qf, e_g\rangle e_g\right) = Qf$$
for all $f \in l^2(G)$; that is, $\sum_{g \in G} u_gu_g^* = Q$.
We also have $\|u_g\|^2 = \|Qe_g\|^2 = \epsilon$ for
all $g$. So by (\cite{akewea}, comment following Corollary 1.2), for any
$k \leq |G|$ there is a set $S \subseteq G$ such that
$$\left\|\sum_{g \in S} u_gu_g^* - \frac{k}{|G|}Q\right\|
= O(\sqrt{\epsilon}).$$
Letting $P$ be the orthogonal projection onto ${\rm span}\{e_g: g \in S\}$,
we have $P = \sum_{g \in S} e_ge_g^*$, and it follows that
$$\left\|QPQ - \frac{k}{|G|}Q\right\|
= \left\|\sum_{g \in S} u_gu_g^* - \frac{k}{|G|}Q\right\|
= O(\sqrt{\epsilon}),$$
which also implies
$$\left\|Q(I - P)Q - \frac{|G| - k}{|G|}Q\right\|
= O(\sqrt{\epsilon}).$$
We therefore have
$$\|PQ\|^2 = \|QPQ\| \leq \frac{k}{|G|} + O(\sqrt{\epsilon})$$
and
$$\|(I-P)Q\|^2 = \|Q(I-P)Q\| \leq \frac{|G| - k}{|G|} + O(\sqrt{\epsilon}),$$
as desired.
The set $S$ might not have cardinality exactly $k$, but it cannot have
cardinality greater than $k + O(\sqrt{\epsilon})$ or less than
$k - O(\sqrt{\epsilon})$, so it can be adjusted to have cardinality $k$
without affecting the order of the estimate, if needed.
\end{proof}

In particular, taking $k \approx |G|/2$ in Theorem \ref{twosided}, we get
$$\|PQ\| \approx \|(1-P)Q\| \approx \frac{1}{\sqrt{2}}.$$
This implies that every nonzero vector in ${\rm ran}(P)$ is roughly $45^\circ$
away from $F$, and the same is true of every nonzero vector in
${\rm ran}(P)^\perp$. That is, if $\epsilon$ is small then from within $F$
the two subspaces are essentially indistinguishable. Another way to say
this is that every vector in $F$ has roughly half of its $l^2$ norm
supported on $S$ and roughly half supported on $G\setminus S$.

\appendix
\section{}

We prove Theorem \ref{onesided}. The argument is a straightforward
application of the spectral sparsification technique introduced in
Srivastava's thesis \cite{S}.

Let $\{u_1, \ldots, u_n\}$ be a finite set of
vectors in ${\bf C}^m$, each of norm $\sqrt{\epsilon}$, satisfying
\[\sum_{i=1}^n u_iu_i^* = I\]
where $u_iu_i^*$ is the rank one operator on ${\bf C}^m$ defined by
$u_iu_i^*: v \mapsto \langle v,u_i\rangle u_i$ and $I$ is the identity
operator on ${\bf C}^m$. Note that
${\rm Tr}(u_iu_i^*) = {\rm Tr}(u_i^*u_i)
= \|u_i\|^2 = \epsilon$; since ${\rm Tr}(I) = m$,
it follows that $n\epsilon = m$.

Let $k < n$. As in \cite{S}, we will build a sequence
$u_{i_1}, \ldots, u_{i_k}$ one element at a time. The construction is
controlled by the behavior of the operators
$A_j = \sum_{d=1}^j u_{i_d}u_{i_d}^*$ using the following tool.
For any positive operator $A$ and any $a > \|A\|$, define the
{\it upper potential} $\Phi^a(A)$ to be
\[\Phi^a(A) = {\rm Tr}((aI - A)^{-1});\]
then, having chosen the vectors $u_{i_1}, \ldots, u_{i_{j-1}}$, the plan
will be to select a new vector $u_{i_j}$ so as to minimize $\Phi^{a_j}(A_j)$,
where the $a_j$ are an increasing sequence of upper bounds.
This potential function disproportionately penalizes eigenvalues which are
close to $a_j$ and thereby controls the maximum eigenvalue, i.e., the norm,
of $A_j$. The key fact about the upper potential is given in the following
result.

\begin{lemma}\label{lem1}
(\cite{S}, Lemma 3.4)
Let $A$ be a positive operator on ${\bf C}^m$, let $a, \delta > 0$, and
let $v \in {\bf C}^m$. Suppose $\|A\| < a$. If
\[\frac{\langle ((a + \delta)I - A)^{-2}v,v\rangle}{\Phi^a(A)
- \Phi^{a+\delta}(A)}
+ \langle ((a + \delta)I - A)^{-1}v,v\rangle \leq 1\]
then $\|A + vv^*\| < a + \delta$ and
$\Phi^{a + \delta}(A + vv^*) \leq \Phi^a(A)$.
\end{lemma}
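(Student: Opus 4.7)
The plan is to reduce the lemma to a direct computation via the Sherman--Morrison rank-one update formula. Writing $M = (a+\delta)I - A$, the hypothesis $\|A\| < a < a + \delta$ ensures that $M$ is positive definite and invertible, and the quantities $\Phi^a(A) > \Phi^{a+\delta}(A) > 0$ are finite with strictly positive difference (by term-by-term comparison of the eigenvalue sums). The whole argument hinges on the identity
\[
((a+\delta)I - (A + vv^*))^{-1} = M^{-1} + \frac{M^{-1}vv^*M^{-1}}{1 - \langle M^{-1}v, v\rangle},
\]
valid whenever $\langle M^{-1}v, v\rangle \neq 1$, which upon taking traces yields the closed-form expression
\[
\Phi^{a+\delta}(A + vv^*) = \Phi^{a+\delta}(A) + \frac{\langle M^{-2}v, v\rangle}{1 - \langle M^{-1}v, v\rangle}.
\]

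First I would verify that $\langle M^{-1}v, v\rangle < 1$. The hypothesis displays $\langle M^{-1}v, v\rangle$ as bounded by $1$ minus a nonnegative quantity (since $M^{-2}$ is positive definite and the denominator $\Phi^a(A) - \Phi^{a+\delta}(A)$ is strictly positive); if $v = 0$ the lemma is trivial, and otherwise $\langle M^{-2}v, v\rangle > 0$ forces strict inequality. This is exactly what is needed for Sherman--Morrison, and moreover it gives the norm conclusion, since $\langle M^{-1}v, v\rangle < 1$ is equivalent to $M - vv^* = (a+\delta)I - (A + vv^*)$ being positive definite, i.e.\ $\|A + vv^*\| < a + \delta$.

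Next I would rearrange the hypothesis by multiplying through by the positive quantity $\Phi^a(A) - \Phi^{a+\delta}(A)$ and then dividing by the positive quantity $1 - \langle M^{-1}v, v\rangle$ to get
\[
\frac{\langle M^{-2}v, v\rangle}{1 - \langle M^{-1}v, v\rangle} \leq \Phi^a(A) - \Phi^{a+\delta}(A).
\]
Substituting this bound into the closed-form expression for $\Phi^{a+\delta}(A + vv^*)$ collapses everything to $\Phi^{a+\delta}(A + vv^*) \leq \Phi^a(A)$, which is the potential conclusion.

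There is essentially no analytic difficulty once Sherman--Morrison is invoked; the only real obstacle is the bookkeeping needed to ensure that every denominator we invert is strictly positive. In retrospect the shape of the hypothesis becomes transparent: it is simply the rearrangement of the desired inequality $\Phi^{a+\delta}(A + vv^*) \leq \Phi^a(A)$ under the Sherman--Morrison identity, engineered so that both the norm bound and the potential bound fall out of a single scalar inequality.
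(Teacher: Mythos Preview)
Your proof is correct and follows exactly the approach indicated in the paper, which does not give a full argument but merely points to the Sherman--Morrison formula as the key tool. You have filled in the details precisely as intended: the hypothesis forces $\langle M^{-1}v,v\rangle < 1$, which both yields the norm bound (via $M - vv^* = M^{1/2}(I - M^{-1/2}vv^*M^{-1/2})M^{1/2}$) and legitimizes the Sherman--Morrison update, after which the potential inequality is a direct rearrangement.
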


The proof relies on the Sherman-Morrison formula, which states that if $A$
is positive and invertible then $(A + vv^*)^{-1} = A^{-1} -
\frac{A^{-1}(vv^*)A^{-1}}{I + \langle A^{-1}v,v\rangle}$.

We also require a simple inequality.

\begin{lemma}\label{lem2}
Let $a_1 \leq \cdots \leq a_m$ and $b_1 \geq \cdots \geq b_m$ be sequences
of positive real numbers, respectively increasing and decreasing. Then
$\sum a_ib_i \leq \frac{1}{m}\sum a_i \sum b_i$.
\end{lemma}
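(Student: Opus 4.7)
The statement is the classical Chebyshev sum inequality in its "oppositely monotone" form, so the plan is to give the standard one-line symmetrization argument. Concretely, I would consider the double sum
\[
\Sigma := \sum_{i=1}^{m}\sum_{j=1}^{m}(a_i - a_j)(b_i - b_j).
\]
Because $(a_i)$ is increasing and $(b_i)$ is decreasing, whenever $i > j$ we have $a_i - a_j \ge 0$ and $b_i - b_j \le 0$, so $(a_i - a_j)(b_i - b_j) \le 0$; the same conclusion holds when $i < j$ (both factors flip sign), and the $i = j$ terms vanish. Hence $\Sigma \le 0$.

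On the other hand, expanding the product termwise and using that the roles of $i$ and $j$ are symmetric gives
\[
\Sigma = 2m\sum_{i=1}^{m} a_i b_i - 2\Bigl(\sum_{i=1}^{m} a_i\Bigr)\Bigl(\sum_{i=1}^{m} b_i\Bigr).
\]
Combining this identity with $\Sigma \le 0$ and dividing by $2m$ yields the desired inequality
\[
\sum_{i=1}^{m} a_i b_i \le \frac{1}{m}\sum_{i=1}^{m} a_i \sum_{i=1}^{m} b_i.
\]
There is no real obstacle here: positivity of the $a_i$ and $b_i$ is not even used, only the opposite monotonicity. (It would also follow from the rearrangement inequality applied to the reversed sequence $b_m, \ldots, b_1$, but the symmetrization above is self-contained and yields the constant $\tfrac{1}{m}$ with no bookkeeping.)
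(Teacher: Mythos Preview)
Your argument is correct: the double-sum symmetrization is the textbook proof of Chebyshev's sum inequality, and every step you wrote is valid. You are also right that positivity of the $a_i$ and $b_i$ plays no role.

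The paper proves the lemma differently. Instead of symmetrizing, it sets $M = \frac{1}{m}\sum b_i$, rewrites the claim as $\sum a_i(b_i - M) \le 0$, and uses the monotonicity of $(b_i)$ to find a threshold index $j$ with $b_i - M \ge 0$ for $i \le j$ and $b_i - M < 0$ for $i > j$. On each side of the threshold the sum $\sum a_i(b_i - M)$ is bounded above by $a_j \sum (b_i - M)$ (using that the $a_i$ are increasing), and the two pieces combine to $a_j \cdot 0 = 0$. Your approach is cleaner and more symmetric, and it delivers the equality condition (namely that one of the sequences is constant) for free by inspecting when $\Sigma = 0$. The paper's pivot argument is a bit more hands-on but has the virtue of making visible exactly how the monotonicity of each sequence is used at the threshold, which mirrors the way barrier arguments in the rest of the appendix work. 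Either proof is perfectly adequate here.
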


\begin{proof}
Let $M = \frac{1}{m}\sum b_i$. We want to show that $\sum a_i b_i \leq
\sum a_iM$, i.e., that $\sum a_i(b_i - M) \leq 0$. Since the sequence $(b_i)$
is decreasing, we can find $j$ such that $b_i \geq M$ for $i \leq j$ and
$b_i < M$ for $i > j$. Then $\sum_{i=1}^j a_i(b_i - M) \leq a_j\sum_{i=1}^j
(b_i-M)$ (since the $a_i$ are increasing and the values $b_i - M$ are positive)
and $\sum_{i=j+1}^m a_i(b_i-M) \leq a_j\sum_{i=j+1}^m (b_i-M)$
(since the $a_i$ are increasing and the values $b_i - M$ are negative). So
\[\sum_{i=1}^m a_i(b_i-M) \leq a_j \sum_{i=1}^m (b_i-M) = 0,\]
as desired.
\end{proof}

\begin{theo}\label{thm1}
Let $m \in {\bf N}$ and $\epsilon > 0$, and suppose $\{u_1, \ldots, u_n\}$ is
a finite sequence of vectors in ${\bf C}^m$ satisfying $\|u_i\|^2 = \epsilon$
for $1 \leq i \leq n$ and $\sum_{i=1}^n u_iu_i^* = I$. Then for any $k \leq n$
there is a set $S \subseteq \{1, \ldots, n\}$ with $|S| = k$ such that
\[\left\|\sum_{i \in S} u_iu_i^*\right\| \leq \frac{k}{n} +
O(\sqrt{\epsilon}).\]
\end{theo}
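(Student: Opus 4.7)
The plan is to apply the iterative barrier (upper potential) argument from Srivastava's thesis \cite{S}, selecting the indices in $S$ one at a time. I fix parameters $a_0 > \epsilon$ and $\delta > 0$ (to be calibrated at the end) and set $a_j = a_0 + j\delta$. Starting from $A_0 = 0$ with $\Phi^{a_0}(A_0) = m/a_0$, the inductive plan is to choose distinct indices $i_1, \ldots, i_k$ so that $\|A_j\| < a_j$ and $\Phi^{a_j}(A_j) \le \Phi^{a_0}(A_0)$ hold at every stage $j \le k$. If this is maintained, the theorem follows from $\|A_k\| < a_k = a_0 + k\delta$.

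For the inductive step I want to invoke Lemma \ref{lem1} with $A = A_{j-1}$, $a = a_{j-1}$, and $v = u_{i_j}$ for some new index $i_j$. Existence of a valid index is established by averaging: the hypothesis $\sum_i u_i u_i^* = I$ gives
\[\sum_{i=1}^n \langle X u_i, u_i\rangle = {\rm Tr}(X)\]
for any operator $X$ on ${\bf C}^m$. Applied to $X = (a_j I - A_{j-1})^{-1}$ and $X = (a_j I - A_{j-1})^{-2}$, this reduces the condition of Lemma \ref{lem1}, averaged over $i = 1, \ldots, n$, to
\[\frac{{\rm Tr}((a_jI - A_{j-1})^{-2})}{n\bigl[\Phi^{a_{j-1}}(A_{j-1}) - \Phi^{a_j}(A_{j-1})\bigr]} + \frac{\Phi^{a_j}(A_{j-1})}{n} \;\le\; 1 - \frac{j-1}{n},\]
the right-hand side being tightened by $(j-1)/n$ so that the conclusion holds for some \emph{unused} index (the summands being nonnegative, a below-threshold value might otherwise sit only on a previously chosen index). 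The second summand on the left is at most $\Phi^{a_0}(A_0)/n = \epsilon/a_0$ by the induction hypothesis together with $n\epsilon = m$. For the first summand, I would express both numerator and denominator in terms of the eigenvalues $\lambda_1,\ldots,\lambda_m$ of $A_{j-1}$ and apply Lemma \ref{lem2} to compare the weighted sums $\sum_i (a_j - \lambda_i)^{-2}$ and $\delta\sum_i (a_{j-1}-\lambda_i)^{-1}(a_j - \lambda_i)^{-1}$, obtaining a bound sharper than the naive termwise estimate $1/(n\delta)$.

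Calibrating $a_0$ and $\delta$ so that the two summands together are at most $1 - (j-1)/n$ for all $j \le k$, and then setting $a_0$ of order $\sqrt{\epsilon}$, should give $a_k = a_0 + k\delta = k/n + O(\sqrt{\epsilon})$. The naive choice $\delta \approx \tfrac{1}{n}/(1 - k/n - \epsilon/a_0)$ at first glance produces $k\delta \approx (k/n)/(1-k/n)$ plus corrections, which blows up as $k/n\to 1$; this is precisely where Lemma \ref{lem2} must be used, to sharpen the trace ratio enough to cancel the $1/(1-k/n)$ factor. The main obstacle in executing the plan is therefore not the mechanics of the barrier iteration, which is standard, but proving that the sharpened ratio bound produces the stated error $O(\sqrt{\epsilon})$ uniformly in $k$, rather than an error that degenerates as $k$ approaches $n$.
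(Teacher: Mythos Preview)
Your overall strategy---barrier iteration with $a_j = a_0 + j\delta$ and Lemma~\ref{lem1} at each step---matches the paper's, but there is a genuine gap in how you intend to deploy Lemma~\ref{lem2}. You average the barrier condition over \emph{all} indices $1,\ldots,n$, obtaining traces of $(a_jI - A_{j-1})^{-1}$ and $(a_jI - A_{j-1})^{-2}$, and then propose to apply Lemma~\ref{lem2} to the sums $\sum_i (a_j-\lambda_i)^{-2}$ and $\delta\sum_i (a_{j-1}-\lambda_i)^{-1}(a_j-\lambda_i)^{-1}$. But both of these are monotone in the \emph{same} direction as functions of $\lambda_i$, so Lemma~\ref{lem2} (which requires one increasing and one decreasing sequence) does not apply, and the naive termwise bound $1/(n\delta)$ is essentially sharp in this formulation. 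That is exactly why your calibration degenerates as $k/n\to 1$; no rearrangement-type inequality on these two sums alone will rescue it.

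The missing idea, which the paper supplies, is to average instead over the \emph{unused} indices $S'$. Since $\sum_{i\in S'} u_i u_i^* = I - A_j$, this replaces the bare traces by ${\rm Tr}\bigl((a_{j+1}I - A_j)^{-1}(I-A_j)\bigr)$ and ${\rm Tr}\bigl((a_{j+1}I - A_j)^{-2}(I-A_j)\bigr)$. Now the factor $I - A_j$ contributes the \emph{decreasing} sequence $1-\lambda_i$, against which the increasing sequences $(a_{j+1}-\lambda_i)^{-1}$ and $(a_{j+1}-\lambda_i)^{-2}$ can legitimately be paired via Lemma~\ref{lem2}. The resulting upper bound collapses to $\frac{1}{\epsilon}{\rm Tr}(I - A_j) = n - j$, precisely the number of unused indices, so some unused index satisfies the barrier condition. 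With $a_0 = \sqrt{\epsilon}$ and $\delta = \frac{1}{n(1-\sqrt{\epsilon})}$ this yields $\|A_k\| < \sqrt{\epsilon} + \frac{k}{n(1-\sqrt{\epsilon})} = \frac{k}{n} + O(\sqrt{\epsilon})$, uniformly in $k$.
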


\begin{proof}
Define $a_j = \sqrt{\epsilon} +
\frac{1}{1-\sqrt{\epsilon}}\cdot\frac{j}{n}$ for $0 \leq j \leq k$.
We will find a sequence of distinct indices $i_1, \ldots, i_k$ such that
the operators $A_j = \sum_{d = 1}^j u_{i_d}u_{i_d}^*$,
$0 \leq j \leq k$, satisfy $\|A_j\| < a_j$ and $\Phi^{a_0}(A_0) \geq
\cdots \geq \Phi^{a_k}(A_k)$. Thus
\[\|A_k\| < \sqrt{\epsilon}
+ \frac{1}{1-\sqrt{\epsilon}}\cdot\frac{k}{n}
= \frac{k}{n} + O(\sqrt{\epsilon}),\]
yielding the desired conclusion. We start with $A_0 = 0$, so that
$\Phi^{a_0}(A_0) = \Phi^{\sqrt{\epsilon}}(0) =
{\rm Tr}((\sqrt{\epsilon}I)^{-1}) = m/\sqrt{\epsilon}$.

To carry out the induction step, suppose $u_{i_1}, \ldots, u_{i_j}$ have
been chosen. Let $\lambda_1 \leq \cdots \leq \lambda_m$ be the eigenvalues
of $A_j$. Then the eigenvalues of $I - A_j$ are $1-\lambda_1 \geq \cdots
\geq 1 - \lambda_m$ and the eigenvalues of $(a_{j+1}I - A_j)^{-1}$ are
$\frac{1}{a_{j+1} - \lambda_1} \leq \cdots \leq \frac{1}{a_{j+1} - \lambda_m}$.
Thus by Lemma \ref{lem2}
\begin{eqnarray*}
{\rm Tr}((a_{j+1}I - A_j)^{-1}(I - A_j))
&=& \sum_{i=1}^m \frac{1}{a_{j+1} - \lambda_i}(1 - \lambda_i)\cr
&\leq& \frac{1}{m}\sum_{i=1}^m \frac{1}{a_{j+1} - \lambda_i}
\sum_{i=1}^m (1-\lambda_i)\cr
&=& \frac{1}{m}{\rm Tr}((a_{j+1}I - A_j)^{-1}){\rm Tr}(I-A_j)\cr
&=& \frac{1}{m} \Phi^{a_{j+1}}(A_j){\rm Tr}(I - A_j)\cr
&\leq& \frac{1}{m} \Phi^{a_j}(A_j){\rm Tr}(I-A_j)\cr
&\leq& \frac{1}{m} \Phi^{a_0}(A_0){\rm Tr}(I-A_j)\cr
&=& \frac{1}{\sqrt{\epsilon}}{\rm Tr}(I - A_j).
\end{eqnarray*}

Next, $a_{j+1} - a_j = \frac{1}{1-\sqrt{\epsilon}}\cdot\frac{1}{n}$, so we can
estimate
\begin{eqnarray*}
\Phi^{a_j}(A_j) - \Phi^{a_{j+1}}(A_j)
&=& {\rm Tr}( (a_jI - A_j)^{-1} - (a_{j+1}I - A_j)^{-1})\cr
&=& \frac{1}{1-\sqrt{\epsilon}}\cdot\frac{1}{n}
{\rm Tr}( (a_jI - A_j)^{-1}(a_{j+1}I - A_j)^{-1})\cr
&>& \frac{1}{1-\sqrt{\epsilon}}\cdot\frac{1}{n}
{\rm Tr}((a_{j+1}I - A_j)^{-2})
\end{eqnarray*}
since each of the eigenvalues
$\frac{1}{a_j - \lambda_i}\frac{1}{a_{j+1} - \lambda_i}$ of the operator
$(a_jI - A_j)^{-1}(a_{j+1}I - A_j)^{-1}$ is greater than the corresponding
eigenvalue $\frac{1}{(a_{j+1} - \lambda_i)^2}$ of the operator
$(a_{j+1}I - A_j)^{-2}$. Combining this with Lemma \ref{lem2} yields
\begin{eqnarray*}
{\rm Tr}((a_{j+1}I - A_j)^{-2}(I - A_j))
&\leq& \frac{1}{m}{\rm Tr}((a_{j+1}I - A_j)^{-2}){\rm Tr}(I - A_j)\cr
&<& \frac{1}{\epsilon}(1 - \sqrt{\epsilon})
(\Phi^{a_j}(A_j) - \Phi^{a_{j+1}}(A_j)){\rm Tr}(I-A_j).
\end{eqnarray*}
Thus
\[\frac{{\rm Tr}((a_{j+1}I - A_j)^{-2}(I-A_j))}{\Phi^{a_j}(A_j) -
\Phi^{a_{j+1}}(A_j)}
\leq \frac{1}{\epsilon}(1 - \sqrt{\epsilon}){\rm Tr}(I-A_j).\]

Now let $S' \subseteq \{1, \ldots, n\}$ be the set of indices which
have not yet been used. Observe that
$\langle Au,u\rangle = {\rm Tr}(A(uu^*))$ and that
$\sum_{i \in S'} u_iu_i^* = I - \sum_{d = 1}^j u_{i_d}u_{i_d}^*
= I - A_j$. Thus
\begin{eqnarray*}
&&\sum_{i \in S'} \left(\frac{\langle (a_{j+1}I -
A_j)^{-2}u_i,u_i\rangle}{\Phi^{a_j}(A_j) - \Phi^{a_{j+1}}(A_j)}
+ \langle (a_{j+1}I - A_j)^{-1}u_i,u_i\rangle\right)\cr
&&= \frac{{\rm Tr}((a_{j+1}I - A_j)^{-2}(I - A_j))}{\Phi^{a_j}(A_j) -
\Phi^{a_{j+1}}(A_j)}
+ {\rm Tr}((a_{j+1}I - A_j)^{-1}(I - A_j))\cr
&&\leq \frac{1}{\epsilon}(1 - \sqrt{\epsilon}){\rm Tr}(I-A_j) +
\frac{1}{\sqrt{\epsilon}}{\rm Tr}(I - A_j)\cr
&&= \frac{1}{\epsilon}{\rm Tr}(I - A_j).
\end{eqnarray*}
But
\[\frac{1}{\epsilon}{\rm Tr}(I-A_j) = \frac{1}{\epsilon}(m - {\rm Tr}(A_j))
= n - j\]
is exactly the number of elements of $S'$. So there must exist some $i \in S'$
for which
\[\frac{\langle (a_{j+1}I - A_j)^{-2}u_i,u_i\rangle}{\Phi^{a_j}(A_j)
- \Phi^{a_{j+1}}(A_j)}
+ \langle (a_{j+1}I - A_j)^{-1}u_i,u_i\rangle
\leq 1.\]
Therefore, by Lemma \ref{lem1}, choosing $u_{i_{j+1}} = u_i$ allows the
inductive construction to proceed.
\end{proof}

Theorem \ref{onesided} follows by taking $m = {\rm dim}(F)$,
$\epsilon = m/|G|$, and $u_i = Qe_i$ for $1 \leq i \leq n = |G|$,
and identifying $F$ with ${\bf C}^m$. Letting $P$ be the orthogonal
projection onto ${\rm span}\{e_i: i \in S\}$, we then have
$\|PQ\|^2 = \|QPQ\| = \|\sum_{i \in S} u_iu_i^*\|$.


\begin{thebibliography}{aaaaaaaa}

\bibitem{akewea}
C.\ Akemann and N.\ Weaver, A Lyapunov-type Theorem from Kadison-Singer,
{\it Bull.\ Lond.\ Math.\ Soc.\ \bf 46} (2014), 517-524.

\bibitem{BNT}
    J. Bourgain, N. Katz, and T. Tao, {\it A Sum-Product Estimate in Finite Fields and Applications, GAFA Geometric and Functional Analysis} {\bf 14} (2004) 27-57.
    
\bibitem{PB} P. Branden, Hyperbolic Polynomials and the Marcus-Spielman-Shrivastava Theorem, arXiv:1412.0245v1.

\bibitem{crt}
E.\ J.\ Cand\`{e}s, J.\ Romberg, and T.\ Tao, Robust uncertainty principles:
exact signal reconstruction from highly incomplete frequency information,
{\it IEEE Trans.\ Inform.\ Theory \bf 52} (2006), 489-509.

\bibitem{DS}
D. L. Donoho, P.B. Stark, Uncertainty principles and signal recovery, {\it SIAM J. Appl.
Math.} {\bf 49} (1989), 906-931.

\bibitem{mss1}
A.\ W. Marcus, D.\ A.\ Spielman, and N.\ Srivastava, Interlacing families
II: mixed characteristic polynomials and the Kadison-Singer problem,
arXiv:1306.3969. (To appear in {\it Annals of Math}, 2015)

\bibitem{mss2}
{---------}, Ramanujan graphs and the solution of the Kadison-Singer
problem, arXiv:1408.4421.

\bibitem{S}
N.\ Srivastava, Spectral Sparsification and Restricted Invertibility,
Ph.D.\ Thesis, Yale University (2010).

\bibitem{TT}
T. Tao,   An Uncertainty Principle for Cyclic Groups of Prime Order, {\it  Mathematics Research Letters} {\bf 12}, (2005) 121-127.

\bibitem{terras}
A.\ Terras, {\it Fourier Analysis on Finite Groups and Applications}. London Mathematical Society Student Texts, {\bf 43}. Cambridge University Press, Cambridge (1999).

\bibitem{YNX} Y. Yang, X. Nu, and C. Xu, {\it Theory and Applications of Higher Dimensional Hadamard Matrices (2 ed.)}, CRC  Press, Boca Raton, 2010.

\end{thebibliography}
\end{document}